\documentclass[
final
]{dmtcs-episciences}
\usepackage{hyperref}
\usepackage[utf8]{inputenc}
\usepackage{subfigure}
\usepackage{amsthm}
\newtheorem{theorem}{Theorem}
\theoremstyle{definition}
\newtheorem{definition}{Definition}
\newtheorem{corollary}{Corollary}
\newtheorem{remark}{Remark}
\usepackage{amssymb} 
\usepackage{amsmath}
\usepackage[numbers]{natbib}

\author[Vrushali Shinde et. al]{Vrushali Shinde\affiliationmark{1}
  \and Lata Kadam\affiliationmark{2}\thanks{Corresponding author}}
\title[Formatting an article for DMTCS]{On the Transversal Coalition in $r$-Uniform Hypergraphs}

\affiliation{
  Marathwada Mitra Mandal's College of Engineering, Pune, India\\
  Department of Mathematics, M.E.S's Abasaheb Garware College (Autonomous), Pune, India}
\keywords{Transversal sets, Transversal coalition, Transversal coalition number}
\begin{document}

\maketitle
\begin{abstract}
 A transversal coalition in a hypergraph $H$ is a partition of the vertex set $U$ into two subsets $U_1$ and $U_2$ such that neither $U_1$ nor $U_2$ alone intersects every hyperedge of $H$, but their union, $U_1 \cup U_2$, intersects every hyperedge in $H$. In this work, we investigate transversal coalition partitions in \( r \)-uniform hypergraphs. Specifically, we determine the transversal coalition number of complete \( r \)-uniform hypergraph, complete bipartite \( r \)-uniform hypergraph, \( r \)-uniform stars, and complete \( r \)-partite \( r \)-uniform hypergraph. We also investigate the transversal coalition number of \( r \)-uniform linear paths and cycles.
\end{abstract}
\section{Introduction}
\label{sec:in}
Domination-related vertex partitions in graphs have been widely investigated in the literature. The notions of dominating sets and domination number were first formally introduced by Øystein Ore in 1962 \cite{Ore}. Since then, domination theory has been extended beyond graphs, and the study of domination in hypergraphs has received considerable attention. Domination in both graphs and hypergraphs has been extensively studied in the literature, as reviewed in \cite{Ach, Div, Hay1}. In a graph $G$, a coalition consists of two disjoint vertex subsets $X$ and $Y$ such that neither $X$ nor $Y$ is a dominating set of $G$, while their union $X \cup Y$ is a dominating set. A dominating set in a graph is a set of vertices such that every vertex not in the set is adjacent to at least one vertex in the set. In this situation, the sets $X$ and $Y$ are said to form a coalition and are referred to as coalition partners. A domination coalition partition of a graph $G$ is a partition, $\mathcal{P}=\{U_1, U_2, \ldots, U_k\}$
of $U(G)$ such that each part $U_i$ either consists of a single vertex that dominates $G$, or there exists a distinct part $U_j$ with which $U_i$ forms a coalition. The coalition number of a graph $G$, denoted by $C(G)$, is the maximum integer $k$ such that $G$ admits a domination coalition partition with $k$ parts.\\
\indent By coalition, we mean a temporary collaboration between two or more groups aimed at attaining a mutual goal. Coalitions in graphs were first introduced and studied by T. W. Haynes et al. \cite{Hay}. 
Since then, this concept has evolved in several directions. 
Alikhani, Bakhshesh, and Golmohammadi explored total coalitions \cite{Ali}. Subsequently, Golmohammadi et al. \cite{Golm} investigated strong coalitions, while Henning and Mojdeh \cite{Hen} extended the research to double coalitions. Furthermore, Mojdeh \cite{Moj, Moj1} contributed by studying perfect and edge coalitions, and independent coalitions were addressed by Samadzadeh and Mojdeh \cite{Sam}.\\
\indent  The transversal coalition in hypergraphs and transversal coalition number were studied in full generality by Henning et al. \cite{Hen1}, who established sharp upper bounds on the transversal coalition number and showed, via constructions based on Latin squares, that these bounds are tight. They also introduced the notion of transversal coalition graphs. In this paper, we restrict our attention to specific classes of hypergraphs, where we obtain sharper bounds and structural insights.\\
\indent The concept of a transversal coalition captures the idea of a subset of vertices intersecting all edges in a hypergraph, reflecting how influence or control can be distributed across the system. Understanding transversal coalitions in $r$-uniform hypergraphs is important because it models scenarios in resource allocation, network security, social networks, and cooperative games where interactions involve multiple agents simultaneously.\\
\indent We focus on coalitions involving transversal sets in $r$-uniform hypergraphs. While general results provide key theoretical insights, special cases such as complete \(r\)-uniform hypergraphs, complete bipartite \(r\)-uniform hypergraphs, \(r\)-uniform linear paths, and cycles reveal unique structural properties that refine these general theories. These special cases also have practical applications in network design, resource allocation, and scheduling, highlighting their importance for both theory and real-world optimization problems.\\
\indent Consider optimization of sensor networks for environmental monitoring. Sensor networks are vital for critical applications, including monitoring wildfires, humidity, and animal movement. To ensure reliability and accuracy, certain regions may require simultaneous coverage by multiple sensors. This scenario can be effectively modeled as an $r$-uniform hypergraph, where sensors are vertices and regions requiring coverage are hyperedges of size $r$. The challenge lies in identifying a \textbf{minimal transversal coalition}, which is the smallest subset of sensors that collectively monitors all critical regions in order to minimize maintenance and energy costs.\\
\indent Consider a forest monitoring system deployed with five sensors $V=\{S_1,\dots, S_5\}$ covering four critical regions, modeled as a $3$-uniform hypergraph with hyperedges $E=\{\{S_1, S_2, S_3\}, \{S_2, S_3, S_4\}, \{S_3,\\ S_4, S_5\}, \{S_1, S_4, S_5\}\}$. Here, the set $\{S_3, S_4\}$ forms a minimal transversal, ensuring all regions are monitored by activating only two sensors. This practical example underscores the significance of transversal coalitions and motivates investigating their properties within specific hypergraph families.\\
\indent In practical sensor networks, ensuring complete coverage with minimal resources is a key challenge. Studying \emph{transversal coalitions} helps understand how groups of sensors can collectively ensure coverage, even when individual subsets are insufficient. While general results exist for arbitrary hypergraphs, real-world sensor networks often have structured layouts that allow more precise analysis. \\
\indent Various configurations of sensor deployments can be systematically represented through suitable classes of $r$-uniform hypergraphs, providing a rigorous framework for analyzing cooperative coverage. When a network is naturally partitioned into two operational regions, complete bipartite $r$-uniform hypergraphs capture the cross-regional interaction patterns that govern coalition formation. Hub-oriented architectures are modeled by $r$-uniform star hypergraphs, where a distinguished central node forms sensing coalitions with peripheral sensors to ensure effective coverage. In heterogeneous settings spanning multiple environmental domains, complete $r$-partite $r$-uniform hypergraphs formalize structured cooperation among sensors distributed across distinct regions. Finally, deployments constrained to linear or cyclic infrastructures are naturally described by $r$-uniform linear paths and cycles, which characterize the minimal interconnected coalition structures necessary to maintain uninterrupted monitoring.\\
\indent By focusing on these special hypergraph families, one can explicitly characterize transversal coalitions, yielding actionable insights for sensor placement that maximize coverage while minimizing cost. This underscores the importance of analyzing structured cases, as they provide concrete and practically relevant results that complement broader theoretical findings.\\
\indent In this paper, we determine the transversal coalition number of certain special $r$-uniform hypergraphs.
\begin{definition}\cite{Ber} ({\bf Transversal of a hypergraph})\\
\indent A transversal \( T \) of a hypergraph  ${H}= (U, \mathcal{E})$ is defined as a subset of the vertex set \( U \) such that for every edge \( e \) in the set of edges \( \mathcal{E} \), the intersection of \( T \) and \( e \) is non-empty (i.e., \( T \cap e \neq \emptyset \)).\\
\indent A transversal in a hypergraph ${H}$ exists exactly when none of the hyperedges are empty. In graph theory, this concept is often referred to as a vertex cover. The \emph{transversal number} $\boldsymbol\tau({H})$ is defined as the minimum number of vertices in a transversal of ${H}$.
\end{definition}
\begin{definition}\cite{Hen1} ({\bf Transversal Coalition})\\
\indent A transversal coalition in a hypergraph ${H} = (U, \mathcal{E})$ is a pair of disjoint subsets of vertices \( U_1 \) and \( U_2 \) such that neither \( U_1 \) nor \(U_2 \) is a transversal of \({H} \), but their union \( U_1 \cup U_2 \) is a transversal. That is,\\
i) \( U_1 \cap e = \emptyset \) for some \( e \in \mathcal{E} \),\\
ii) \( U_2 \cap e' = \emptyset \) for some \( e' \in \mathcal{E} \), but\\
iii) \( (U_1 \cup U_2) \cap e \neq \emptyset \) for all \( e \in \mathcal{E} \).\\
\indent In this case, \( U_1 \) and \( U_2 \) are called transversal coalition partners.
\end{definition}
\begin{definition}\cite{Hen1} ({\bf Transversal Coalition Partition})\\
\indent A transversal coalition partition ($trc$-partition) of a hypergraph $H$ with vertex set $U$ ($|U|=n$) is a partition of $U$ into subsets $\varphi=\{U_1, U_2, \dots, U_k\}$ where each subset $U_i$ is either a transversal set which is a singleton or, if it is not a transversal set, it forms a transversal coalition together with another subset $U_j$ that is also non-transversal. The {\bf transversal coalition number} ${\boldsymbol C_{\tau}(H)}$ is the maximum order $k$ for which such a partition exists. 
\end{definition}
\indent A few examples are presented to illustrate this concept. 
Let ${H}=(U, \mathcal{E})$ be a $3$-uniform hypergraph 
with the vertex set $U=\{u_1, u_2, u_3, u_4, u_5\}$
and hyperedge family $\mathcal{E}=\{ e_1=\{u_1, u_2, u_3\},\ 
~e_2=\{u_2, u_4, u_5\},\ ~e_3=\{u_1, u_3, u_5\},\ ~e_4=\{u_3, u_4, u_5\}\}$. A path of length $3$ in ${H}$ is given by $u_1 e_3 u_5 e_2 u_2 e_1\\ u_3.$ The partition $\varphi=\{\{u_{1}\}, \{u_{2}\}, \{u_{3}\}, \{u_{4}\}, \{u_{5}\}\}$ is a $trc$-partition for the path $P_3$. No set of $\varphi$ is a transversal set, but $\{u_{1}\}$ and $\{u_{4}\}$ forms a transversal coalition; $\{u_{2}\}$ and $\{u_{5}\}$ forms a transversal coalition; $\{u_{3}\}$ and $\{u_{5}\}$ forms a transversal coalition. It follows that every set can be paired with at least one other set to form a transversal coalition. Therefore, $C_\tau(P_3)$=$5$.\\
\indent Now consider a cycle $C_3$ of length 3 in a 3-uniform hypergraph ${H}=(U,\mathcal{E})$ with vertex set $U=\{u_1, u_2, u_3, u_4\}$ and edge set $\mathcal{E}=\{e_1=\{u_{1},u_{2},u_{3}\}, ~e_2=\{u_{1},u_{2},u_{4}\}, ~e_3=\{u_{1},u_{3},u_{4}\}\}$ as $ u_1 e_2 u_4 e_3 u_3 e_1 u_1$. The singleton partition $\varphi_{1}$ of the above cycle is $\{\{u_{1}\}, \{u_{2}\}, \{u_{3}\}, \{u_{4}\}\}$. In $\varphi_1$, the set $\{u_{1}\}$ is a singleton transversal set, while the remaining sets form a transversal coalition that includes at least one set in $\varphi_1$. Hence, $C_\tau(C_3)=4$.\\ 
\indent A self-complementary 3-uniform hypergraph $H$ with vertex set
$U=\{u_1,u_2,u_3,u_4,u_5\}$ and edge set
$\mathcal{E}=\{\{u_1,u_2,u_5\},\{u_1,u_4,u_5\},\{u_2,u_3,u_5\},\{u_1,u_3,u_4\},\{u_2,u_3,u_4\}\}$. The $trc$-partition is given by
$\varphi=\{\{u_1\},\{u_2\},\{u_3\},\{u_4\},\{u_5\}\}$. 
Here, each member of $\varphi$ forms a transversal coalition with at least one member. Therefore, $C_\tau(H)=5$.\\
\indent In \cite{Hen1} Henning proved the following result.  
\begin{theorem}
\cite{Hen1} A hypergraph $H$ has a transversal coalition partition if and only if there is no edge $e$ in $H$ with $e \neq V (H)$ that is a subset of all edges in $H$.
\end{theorem}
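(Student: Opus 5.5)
We prove both directions, working throughout with the singleton partition and with the complements of edges. The key observation is that a set $S\subseteq V(H)$ fails to be a transversal exactly when $S\subseteq V(H)\setminus e$ for some edge $e$. We may assume $H$ has at least one edge and no empty edge.

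\emph{Necessity.} Suppose some edge $e_0\neq V(H)$ is contained in every edge of $H$. Then a set $S$ is a transversal if and only if $S\cap e_0\neq\emptyset$. Let $\varphi$ be any partition of $V(H)$. Since $e_0\neq V(H)$, some vertex $v$ lies outside $e_0$; let $U_i$ be the part containing $v$. Now split into two cases. If $U_i$ meets $e_0$, then $U_i$ is a transversal; it is not a singleton, because $v\notin e_0$ while $U_i$ also contains a vertex of $e_0$, so this is not allowed. If $U_i\cap e_0=\emptyset$, then $U_i$ needs a non-transversal partner $U_j$, which must also satisfy $U_j\cap e_0=\emptyset$. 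Then $(U_i\cup U_j)\cap e_0=\emptyset$, so the union is not a transversal. Hence no $trc$-partition exists.

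\emph{Sufficiency.} Assume no such edge exists, and try the singleton partition $\{\{v\}:v\in V(H)\}$.
\begin{itemize}
\item A singleton $\{v\}$ that is a transversal (i.e.\ $v$ lies in every edge) is automatically allowed.
\item Otherwise we must find a vertex $w$ such that $\{w\}$ is not a transversal and $\{v,w\}$ is a transversal.
\end{itemize}
The natural route is to pick an edge $e$ avoiding $v$ and choose $w$ in $e$. This could fail, and that is the main obstacle: $\tau(H)$ may exceed $2$, in which case no pair is a transversal at all.

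To handle this, we abandon singletons when necessary and use a two-part partition $\{A,V(H)\setminus A\}$, where both $A$ and its complement are non-transversals; their union $V(H)$ is trivially a transversal. Such an $A$ exists if and only if there are edges $e,f$ with $e\cup f=V(H)$: take $A=V(H)\setminus e\subseteq f$, so that $A$ misses $e$ and its complement misses $f$.

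If no two edges cover $V(H)$, we instead build a partition greedily. Fix an edge $e$ and a vertex $x\notin e$ (this needs $e\neq V(H)$). Take the parts $A=V(H)\setminus e$ and then split $e$ into pieces, each of which must be paired with $A$ or with another piece. The hypothesis gives, for the chosen $e$, an edge $f$ with $e\not\subseteq f$. The plan is to choose the pieces of $e$ so that each one misses some edge while its union with $A$, or with a companion piece, meets all edges.

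The delicate point is making each union a transversal. It is cleaner to merge the pieces into a single part: use the partition $\{V(H)\setminus e,\ e\}$ when $e\cup f=V(H)$ fails. Failing that, a three-part partition built from $e$, $f$ and their complements, checked case by case, is the expected hardest step. Ultimately we rely on the original theorem of \cite{Hen1} for the exact sufficiency construction, verifying it against the characterization above.
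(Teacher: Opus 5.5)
There is a genuine gap: you prove only the necessity direction. (The paper gives no proof of this theorem; it quotes it from \cite{Hen1}, so your proposal is judged on its own.)

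Your necessity argument is correct and complete. If $e_0\neq V(H)$ lies in every edge, then a set is a transversal exactly when it meets $e_0$. The part containing a vertex outside $e_0$ is then either a non-singleton transversal, or a non-transversal whose every non-transversal partner also avoids $e_0$.

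The sufficiency direction is never established. You end by deferring ``the exact sufficiency construction'' to \cite{Hen1}, which is circular. Several intermediate claims are also false:
\begin{itemize}
\item A two-part partition $\{A,V(H)\setminus A\}$ into non-transversals exists if and only if $H$ has two \emph{disjoint} edges $e,f$, since you need $e\subseteq A$ and $f\subseteq V(H)\setminus A$. Two edges with $e\cup f=V(H)$ do not suffice: with $A=V(H)\setminus e$ the complement is $e$, which is a non-transversal only if $e$ misses some edge.
\item For the same reason, $\{V(H)\setminus e,\ e\}$ is not a $trc$-partition merely because $e\cup f\neq V(H)$.
\end{itemize}
Many hypergraphs satisfying the hypothesis have no two disjoint edges, for example $K_3$ viewed as a $2$-uniform hypergraph. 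So a strategy built on two-part partitions cannot work in general, and the promised three-part case analysis is never carried out.

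The missing idea is to let one large part serve as a common partner for singletons. Let $I$ be the intersection of all edges and pick an inclusion-minimal edge $f$.
\begin{itemize}
\item If $f=V(H)$, every edge equals $V(H)$ and the singleton partition works. If $H$ has no edges, singletons also work.
\item Otherwise take $\varphi=\{\{v\}:v\in f\}\cup\{V(H)\setminus f\}$.
\item Each $v\in I\subseteq f$ gives a singleton transversal.
\item The part $V(H)\setminus f$ is nonempty and misses $f$, so it is a non-transversal.
\item For $y\in f\setminus I$, the singleton $\{y\}$ misses some edge, so it is a non-transversal.
\item The union $\{y\}\cup(V(H)\setminus f)$ is a transversal. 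An edge $g\not\subseteq f$ meets $V(H)\setminus f$. An edge $g\subseteq f$ equals $f$ by minimality, so it contains $y$.
\item Finally, $V(H)\setminus f$ has a partner because $f\setminus I\neq\emptyset$. Otherwise $f=I$ would be an edge different from $V(H)$ contained in all edges. This is exactly where the hypothesis is used.
\end{itemize}
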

\begin{corollary}\cite{Hen1} Every hypergraph $H$ with $\tau(H) \geq 2$ has a transversal coalition partition.
\end{corollary}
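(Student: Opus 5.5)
We derive the corollary from the preceding theorem of \cite{Hen1}. That theorem says $H$ has a transversal coalition partition exactly when no edge $e \neq V(H)$ is contained in every edge of $H$. So it suffices to show that if $\tau(H)\ge 2$, then no such edge exists. We argue by contraposition: assuming such an edge exists, we will show $\tau(H)\le 1$.

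Suppose $e$ is an edge of $H$ with $e\neq V(H)$ and $e\subseteq f$ for every edge $f$ of $H$. There are two cases, according to whether $e$ is empty.

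\textbf{Case $e\neq\emptyset$.} Pick any vertex $v\in e$. For every edge $f$ we have $v\in e\subseteq f$, so $\{v\}$ meets every edge. Hence $\{v\}$ is a transversal and $\tau(H)\le 1$, contradicting $\tau(H)\ge 2$.

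\textbf{Case $e=\emptyset$.} Then $H$ has an empty edge, and by the remark after the definition of a transversal, $H$ has no transversal at all. So $\tau(H)$ is undefined, or $+\infty$ by convention. The hypothesis $\tau(H)\ge 2$ is meant to include the existence of a transversal, so this case is excluded. This convention is the only delicate point, and I would state it explicitly: the hypothesis $\tau(H)\geq 2$ presupposes that $\tau(H)$ is a finite number, so $H$ has no empty edge.

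With both cases ruled out, no edge $e\neq V(H)$ lies in all edges of $H$. The preceding theorem then gives a transversal coalition partition of $H$.
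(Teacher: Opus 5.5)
Your argument is correct and takes the intended route: the paper gives no separate proof and simply places the corollary after the characterization theorem of \cite{Hen1}, from which it follows because a nonempty edge contained in every edge yields a vertex lying in every edge, i.e.\ $\tau(H)\le 1$. Your explicit treatment of the empty-edge case is needed for the argument to be complete. Under the convention $\tau(H)=+\infty$, a hypergraph with nonempty vertex set and an empty edge would satisfy $\tau(H)\ge 2$ yet have no transversal coalition partition, so the hypothesis must indeed presuppose that a transversal exists.
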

\begin{remark} \label{rem1}If $H $ is a hypergraph of order $n$ then $1\leq C_\tau(H)\leq n $ i.e. for nontrivial $H$, $C_\tau(H) > 1$.
\end{remark}
\indent Here the trivial hypergraph \( H \), which consists of only one vertex, is the only hypergraph that meets the lower bound established by Remark \ref{rem1}. In contrast, an $n$-uniform complete hypergraph with $n$ vertices achieves the upper bound. This is because \( K_n^n \) contains a single edge that includes all the vertices. Consequently, in the $trc$-partition, all elements will be singletons. For the simplest nontrivial $2$-uniform hypergraph with a single edge connecting two vertices, $C_\tau(H) > 1$. Moreover, the transversal coalition number is $2$. \\
\indent For example, consider a complete 4-uniform hypergraph with the vertex set \( U=\{u_1, u_2, u_3, u_4\} \) and edge set \( \mathcal{E}=\{\{u_1, u_2, u_3, u_4\}\} \). In this case, all singletons form a singleton transversal set. Thus, we have \( C_\tau(K_4^4)=4 \).\\
\indent In graph theory, the singleton vertex partition is obtained for the complete graph $K_n$. In the transversal coalition partition of $K_n$, each partition set is a singleton. 
Hence, $C_\tau(K_n)=n$.\\
\indent In the next theorem, we give the transversal coalition number of $K_n^{r}$.
\begin{theorem}\label{th1}
If $H$ is a complete $r$-uniform hypergraph $K_n^{r}$ then \\i)\hspace{2pt} $C_\tau(K_n^{r})=n$, if $n=r$ or $n=r+1$.\\  
ii)\hspace{2pt} $C_\tau(K_n^{r})=r+1$ if $r\geq 3$ and $n > r+1$.
\end{theorem}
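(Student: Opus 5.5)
We work directly with the structure of $K_n^{r}$ on vertex set $U$ with $|U|=n$, whose edges are all the $r$-subsets of $U$. The first step is to characterize transversals by cardinality: a set $S\subseteq U$ misses some edge exactly when $|U\setminus S|\ge r$. Hence $S$ is a transversal if and only if $|S|\ge n-r+1$. It follows that two disjoint sets $A,B$ are transversal coalition partners if and only if
\[
|A|\le n-r,\qquad |B|\le n-r,\qquad |A|+|B|\ge n-r+1 .
\]
Every claim in Theorem~\ref{th1} then reduces to counting.

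For part i), Remark~\ref{rem1} already gives $C_\tau\le n$, so it suffices to show that the partition into singletons is a $trc$-partition.
\begin{itemize}
\item If $n=r$, the threshold is $n-r+1=1$, so every singleton is a transversal. The singleton partition is then a $trc$-partition consisting of singleton transversals.
\item If $n=r+1$, the threshold is $2$, so no singleton is a transversal. Any two singletons together have size $2$, so any two distinct singletons are coalition partners. The singleton partition is again a $trc$-partition, provided $n\ge 2$.
\end{itemize}

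For part ii), put $t=n-r\ge 2$. No singleton is a transversal, since $1<t+1$, so every part of a $trc$-partition $\{U_1,\dots,U_k\}$ needs a partner.

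\textbf{Construction (lower bound).} Take one part of size $t$ together with the remaining $r$ vertices as singletons. The big part has size $t<t+1$, so it is not a transversal. Its union with any singleton has size $t+1$, so it is a coalition partner of every singleton. This gives a $trc$-partition with $r+1$ parts, so $C_\tau(K_n^r)\ge r+1$.

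\textbf{Upper bound.} Let $L$ be a part of maximum size $m$; since $L$ is not a transversal, $m\le t$. Each part $X$ has a partner $Y$ with $|X|+|Y|\ge t+1$, and $|Y|\le m$, so $|X|\ge t+1-m=:d$. This applies to $L$ as well, but we only need it for the $k-1$ other parts. Summing sizes gives
\[
n=r+t\ \ge\ m+(k-1)d=(t+1-d)+(k-1)d .
\]
\begin{itemize}
\item If $d=1$, that is $m=t$, this reads $k-1\le r$.
\item If $d\ge 2$, it gives $k-1\le (r+d-1)/d$. This is at most $r$, because $r+d-1\le rd$ is equivalent to $(r-1)(d-1)\ge 0$.
\end{itemize}
Either way $k\le r+1$, which together with the construction gives $C_\tau(K_n^{r})=r+1$.

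The main step needing care is this upper bound, specifically the idea of comparing every part against the largest part in order to get a uniform lower bound $d$ on the part sizes. The rest is routine verification. The hypothesis $r\ge 3$ does not appear to be needed by this argument; I would check whether it is used anywhere, and if not, note that the argument also covers $r=2$.
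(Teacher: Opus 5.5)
Your proof is correct. Part i) and the lower-bound construction in part ii) match the paper's, but your upper bound in part ii) takes a genuinely different and more rigorous route.

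The paper asserts without proof that a maximum $trc$-partition must contain a part with $n-r$ vertices. It then only checks that splitting that part of its specific partition leaves the edge formed by the $r$ singletons uncovered. This excludes refinements of the constructed partition, but not partitions of other shapes, such as several medium-sized parts. Your reduction of transversality to $|S|\ge n-r+1$, followed by the estimate $n\ge m+(k-1)(t+1-m)$ obtained by comparing every part with a largest one, bounds all $trc$-partitions at once. Tracking the equality case in your inequality even recovers the paper's claim that the extremal partition is unique up to relabeling when $r\ge 2$; that partition has one part of size $n-r$ and the rest singletons.

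You are also right that $r\ge 3$ is never used, in your argument or in the paper's. For $r=2$ your argument gives $C_\tau(K_n)=3$ for $n\ge 4$. This shows that the paper's introductory remark $C_\tau(K_n)=n$ holds only for $n\le 3$: in $K_4$, the union of two singletons misses the edge on the other two vertices.
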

\begin{proof}
Let $H$ be a complete $r$-uniform hypergraph $K_n^{r}$ with vertex set $U=\{u_1, u_2,\ldots, u_n\}$.\\
i) If \( n = r \), then we obtain a single edge for the hypergraph that contains all the vertices. The \( trc \)-partition will consist of all singletons. Therefore, \( \varphi=\{ \{ u_1 \}, \{ u_2 \},\ldots, \{ u_n \} \} \). Thus, \( C_\tau(K_n^{r}) = n = r \).\\
\indent For $n=r+1$, the $r$-uniform hypergraph will have $n$ edges where the degree of each vertex $u$ is $r=(n-1)$, and it is not present in exactly one edge of an $r$-uniform hypergraph $K_n^{r}$. It is clear that the vertex $u$ that is present in $ \binom{n}{r} -1=r$  edges forms a transversal coalition with every vertex of the remaining edge. Hence, $C_\tau(H)=n$. \\
ii) Now if $r\geq 3$ and $n > r+1$. Let $\varphi$~=~\{$P_{1}, P_{2}, P_{3},\ldots,P_{k}$\} be the $trc$-partition of $K_n^{r}$. Observe that one of the partition sets $P_i$ must contain $(n-r)$ vertices to get the maximum order transversal coalition partition. Since $P_i$ contains $(n-r)$ vertices, the remaining $r$ vertices can be partitioned into singletons. Thus $k=r+1$. Let  $P_1=\{u_1, u_2,\ldots, u_{n-r}\}$,
$P_2=\{u_{n-r+1}\}, P_3=\{u_{n-r+2}\},\ldots, P_{r+1}=\{u_{n}\}$. Here $P_2, P_3,\ldots,P_{r+1}$ forms a transversal coalition with $P_1$. Now we will prove that this is the maximum order $trc$-partition. \\   
\indent Suppose we partition $P_1$ as $P_{1,1}$ and $P_{1,2}$, which forms a transversal coalition, then we get a new $trc$-partition as  $\varphi'=\{P_{1,1}, P_{1,2}, P_{2}, P_{3},\ldots, P_{r+1}\}$. It suffices to consider that $ u_n\in P_{r+1}, u_{n-1}\in P_{r}, u_{n-2}\in P_{r-1},\ldots, u_{n-r+2}\in P_{3}, u_{n-r+1}\in P_{2}$. If $P_{1,1}$ forms a transversal coalition with $P_{1,2}$ then there will be an edge $\{u_{n-r+1},u_{n-r+2},\ldots, u_{n-2},u_{n-1},u_{n}\}$ which is covered by neither $P_{1,1}$ nor $P_{1,2}$. A contradiction to $ \varphi'$ is a $trc$-partition. Hence, we have $\varphi$ as the only $trc$-partition for $K_n^{r}$ with maximum order $(r+1)$. 
Therefore, $C_\tau(K_n^{r})=r+1$. 
\end{proof}
\begin{definition}\cite{Bre}{\bf(Complete Bipartite \( r \)-uniform hypergraph)}\\
\indent A \( r \)-uniform hypergraph $H$ with vertex set $U=U_1\cup U_2, ~U_1 \cap U_2=\phi$ and the edge set $\mathcal{E}=\{ e : e \subset U, ~|e| = r \text{ and } e \cap U_i \neq \phi, \text{ for } i=1,2 \}$. It is called a complete bipartite $r$-uniform hypergraph. It is denoted as $K^r(U_1,U_2) \text{ or } K^{r}_{m,n} \text{ where }\\ |U_1|=m \text{ and } |U_2|=n.$ If $m=1$ or $n=1$, the hypergraph is referred to as a star. In this case, there exists a vertex $ u \in U $, called the center, such that every hyperedge $ e \in \mathcal{E} $ contains  $u$. That is, $\forall~ e \in \mathcal{E},~ u\in e.$\\
\indent To determine the transversal coalition number of $K^{r}_{m,n}$, we begin with the transversal coalition number of a star.\\
\indent In the following result, we prove that $C_\tau(K_{1,n}^{r})=r$ or $r+1$.
\end{definition}
\begin{theorem}\label{th2} Let $H$ be an $r$-uniform star hypergraph $K_{1,n}^{r}$ then \\
i)\hspace{2pt}  $C_\tau(K_{1,n}^{r})=r$ if $n=r-1$\\
ii)\hspace{2pt}  $C_\tau(K_{1,n}^{r})=r+1$ if $n\geq r$.
\end{theorem}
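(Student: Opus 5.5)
We will work directly with the structure of the star. Write $c$ for the center and $L$ for the set of $n$ leaves. Every edge of $K_{1,n}^{r}$ meets both parts, so the edges are exactly the sets $\{c\}\cup S$ with $S\subseteq L$ and $|S|=r-1$. The whole argument rests on one characterization, which we prove first. A set $X$ is a transversal if and only if either $c\in X$, or $X\subseteq L$ and $|L\setminus X|\le r-2$. Indeed, an edge is missed by a set $X\subseteq L$ exactly when $L\setminus X$ contains $r-1$ leaves.

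\emph{Case (i), $n=r-1$.} Here there is a single edge, namely $U$ itself, and $|U|=r$. Every singleton is then a transversal, so the partition of $U$ into singletons is a $trc$-partition. This gives $C_\tau=r$, matching the upper bound $n+1=r$ of Remark~\ref{rem1}.

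\emph{Case (ii), $n\ge r$: upper bound.} Let $\varphi$ be any $trc$-partition. The part $P$ containing $c$ is a transversal, so by definition it must be a singleton; hence $P=\{c\}$. Every other part $A\subseteq L$ is non-transversal, because $n\ge r$ allows $|L\setminus A|\ge r-1$ to fail only if $A$ is large, and in any case $A$ must have a non-transversal partner. The set $\{c\}$ cannot serve as that partner, since it is a transversal. So the partner of $A$ is another leaf part $B$, and $A\cup B$ being a transversal means $|L\setminus(A\cup B)|\le r-2$. All leaf parts other than $A$ and $B$ are nonempty subsets of $L\setminus(A\cup B)$, so there are at most $r-2$ of them. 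Hence there are at most $r$ leaf parts, and $|\varphi|\le r+1$.

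\emph{Case (ii), $n\ge r$: construction.} Take $A\subseteq L$ with $|A|=n-r+1$; this is nonempty since $n\ge r$. Put each of the remaining $r-1$ leaves in its own singleton, and add $\{c\}$. We check the required properties using the characterization:
\begin{itemize}
\item $A$ is non-transversal, since $|L\setminus A|=r-1$.
\item Each leaf singleton is non-transversal, since its complement in $L$ has $n-1\ge r-1$ elements.
\item Each leaf singleton together with $A$ leaves exactly $r-2$ leaves uncovered, so their union is a transversal.
\end{itemize}
Thus $A$ and every leaf singleton have a coalition partner, and $\{c\}$ is a singleton transversal. This partition has $r+1$ parts.

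The only delicate step is the upper bound, specifically the observation that $c$ is forced into a singleton part. This forces all coalitions to occur inside $L$, after which the counting argument is immediate.
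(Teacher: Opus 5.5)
Your proof is correct, and for the upper bound in (ii) it takes a genuinely different and stronger route than the paper. The paper exhibits the same extremal partition: a block of $n-r+1$ leaves, $r-1$ leaf singletons, and the center alone, with $n=r$ treated separately as all singletons. It then argues maximality only by splitting the large block into $\{v_1\}$ and the rest and observing that the new part has no partner. That rules out one particular refinement of its own construction, not an arbitrary $trc$-partition with $r+2$ or more parts.

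You instead bound every $trc$-partition. The part containing $c$ is a transversal, so it must be $\{c\}$, which cannot act as a coalition partner. Hence any leaf part $A$ pairs with another leaf part $B$, and $|L\setminus(A\cup B)|\le r-2$ limits the remaining (nonempty, pairwise disjoint) leaf parts to at most $r-2$. Your characterization of transversals (a set avoiding $c$ is a transversal if and only if it misses at most $r-2$ leaves) makes both this bound and the verification of the construction mechanical. Your choice $|A|=n-r+1$ also handles $n=r$ and $n>r$ uniformly. The paper's argument adds nothing beyond the explicit partition, which you also give; your counting is what actually establishes $C_\tau(K_{1,n}^{r})\le r+1$.

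One sentence needs repair. Your justification that every leaf part $A$ is non-transversal (``$n\ge r$ allows $|L\setminus A|\ge r-1$ to fail only if $A$ is large, and in any case $A$ must have a non-transversal partner'') asserts the conclusion rather than proving it. The correct reason is the one you already use for $c$, combined with a fact you prove later in the construction. A transversal part must be a singleton, and a single leaf $\{v\}$ is never a transversal because $|L\setminus\{v\}|=n-1\ge r-1$. So every leaf part is non-transversal and must have a non-transversal partner, after which your counting goes through unchanged.
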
 
\begin{proof}
i) It is very clear that when $n=r-1$ we get a single edge for the star hypergraph $K_{1,n}^{r}$. Hence, every vertex is a singleton transversal set, including the center of the star hypergraph. Therefore,  $C_\tau(K_{1,n}^{r})=r$, if $n=r-1$. \\
ii) Suppose \( n = r \), then we can consider the vertex set \( U \) as \( U=U_1 \cup U_2 \). The vertex partition is defined as \( U_1=\{u\} \) and \( U_2=\{v_1, v_2, v_3,\ldots, v_n\} \). Here, \( \{u\} \) is a singleton transversal set. Let the \( trc \)-partition be denoted as \( \varphi=\{P_1, P_2, P_3,\ldots, P_k\} \).\\
\indent Given that the edge contains \( r \) vertices and \( n = r \), one vertex will be taken from \( U_1 \) and the remaining \( (r - 1) \) vertices will be selected from \( U_2 \). This means that any vertex in \( U_2 \) will form a transversal coalition with all the other remaining vertices. As a result, there will be \( r \) singletons in the partition of \( U_2 \), leading to \( k=r + 1 \).\\
\indent We can denote the partitions as follows: \( P_1=\{u\} \), \( P_2=\{v_1\} \), \( P_3=\{v_2\} \), \( P_4=\{v_3\} \), and so on, up to \( P_r=\{v_{n-1}\} \) and \( P_{r+1}=\{v_n\} \). Here, \( P_1 \) represents the singleton transversal set, while the remaining members of the \( trc \)-partition will create transversal coalitions with one another. Clearly, this represents the maximum order of the \( trc \)-partition. Therefore, $ C_\tau(K_{1,n}^{r})=r+1$. \\      
\indent If \( n > r \), then consider the vertex set \( U = U_1 \cup U_2 \). The vertex partition is given as \( U_1 = \{u\} \) and \( U_2 = \{v_1, v_2, v_3,\ldots, v_n\} \). Here, \( \{u\} \) is a singleton transversal set. Let the $trc$-partition be \( \varphi=\{P_1, P_2, P_3,\ldots, P_{k} \} \). Observe that one of the partition sets $P_i$ must contain $(n-r+1)$ vertices to obtain the maximum-order transversal coalition partition. Since $P_i$ contains $(n-r+1)$ vertices, the remaining $(r-1)$ vertices can be partitioned into singletons. Also, $P_j, j\neq i$, is a singleton transversal set that contains the center of the star hypergraph. Thus, $k=r+1$.\\
\indent Let us consider $P_1=\{u\}, ~P_2=\{v_1, v_2, v_3,\ldots, v_{n-r+1}\}, ~P_3=\{v_{n-r+2}\}, ~P_4=\{v_{n-r+3}\},\ldots, \\P_r=\{v_{n-1}\}$ and $P_{r+1}=\{v_{n}\}$. Here $P_2$ forms a transversal coalition with $ P_3,\ldots, P_r, P_{r+1}$. Therefore, $C_\tau(K_{1,n}^{r})=r+1$.\\ 
\indent Suppose the set $P_2$ is partitioned into $P_{2,1}$ and $P_{2,2}$, where $P_{2,1}=\{v_1\}$ and $P_{2,2}=\{v_2,v_3,\ldots,v_{n-r+1}\}$. Define $\varphi'=\{P_1, P_{2,1}, P_{2,2}, P_3, \ldots, P_r, P_{r+1}\}$ to be the new $trc$-partition. Assume that $P_{2,2}$ forms a transversal coalition with $P_3$. But then there exists an edge $e=\{u, v_1, v_{n-r+3},\ldots, v_{n-1}, v_n\}$ which is not covered by $P_{2,2} \cup P_3$, a contradiction to $P_{2,2}$ and  $P_3$ are transversal coalition partners. Also, we can observe that $P_{2,2}$ will not form a transversal coalition with any other set in $\varphi'$. Hence, $\varphi$ is the only $trc$-partition of $r$-uniform star hypergraph $K_{1,n}^{r}$ with maximum order $(r+1)$. Therefore, $ C_\tau(K_{1,n}^{r})=r+1$.
\end{proof}\\
\indent The following theorem provides the $trc$-number of $K_{m,n}^{r}$, subject to specific conditions on $m$ and $n$.
\begin{theorem}\label{th3} For a complete bipartite $r$-uniform hypergraph $K_{m,n}^{r},r\geq3$ and $m,n >1$,\\ 
i) $C_\tau(K_{m,n}^{r})=r$, if $m+n = r$.\\
ii) $C_\tau(K_{m,n}^{r})=r+1$, if $m+n= r+1$\\
iii) $C_\tau(K_{m,n}^{r})=r+1$, if $m+n > r+1$ and either $m=r-1$ or $n=r-1$.
\end{theorem}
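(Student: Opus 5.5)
The three cases have quite different flavours. I would treat (i) and (ii) by reducing to Theorem \ref{th1}. Case (iii) needs a direct construction together with a counting argument for the upper bound.

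\textbf{Cases (i) and (ii).} For (i), $m+n=r$ means the only $r$-subset of $U$ is $U$ itself. Since $m,n\ge 1$, this set meets both parts, so $K^r_{m,n}$ has exactly one edge $U$. Every singleton is then a transversal, exactly as in Theorem \ref{th1}(i), which gives $C_\tau=r$.

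For (ii), $m+n=r+1$, so every $r$-subset of $U$ omits exactly one vertex. Because $m,n\ge 2$, removing one vertex still leaves at least one vertex in each of $U_1$ and $U_2$. Hence every $r$-subset is an edge, $K^r_{m,n}=K^r_{r+1}$, and Theorem \ref{th1}(i) gives $C_\tau=r+1$.

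\textbf{Case (iii): a usable description of transversals.} By symmetry assume $m=r-1$. Since $|U_1|=r-1<r$, no $r$-subset lies inside $U_1$. So the edges are exactly the $r$-subsets of $U$ not contained in $U_2$. A set $S$ is therefore a transversal if and only if its complement $\overline S$ contains no such $r$-subset, that is, either $|\overline S|\le r-1$ or $\overline S\subseteq U_2$.

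\textbf{Case (iii), lower bound.} Write $U_1=\{u_1,\dots,u_{r-1}\}$ and fix $v_1\in U_2$. I would take
\[
\varphi=\bigl\{\{u_1\},\dots,\{u_{r-1}\},\{v_1\},U_2\setminus\{v_1\}\bigr\},
\]
which has $r+1$ parts. The checks use the criterion above and $m+n\ge r+2$:
\begin{itemize}
\item The complement of $U_2\setminus\{v_1\}$ is $U_1\cup\{v_1\}$. It has size $r$ and meets $U_2$, so it is an edge and $U_2\setminus\{v_1\}$ is not a transversal.
\item The singletons are not transversals, since their complements have size at least $r+1$ and contain an $r$-set meeting $U_2$.
\item For each $i$, the set $\{u_i\}\cup(U_2\setminus\{v_1\})$ has complement $(U_1\setminus\{u_i\})\cup\{v_1\}$ of size $r-1$, so it is a transversal.
\item The set $\{v_1\}\cup(U_2\setminus\{v_1\})=U_2$ has complement $U_1$ of size $r-1$, so it is a transversal.
\end{itemize}
Hence every part has a coalition partner, and $C_\tau\ge r+1$.

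\textbf{Case (iii), upper bound.} Suppose $\varphi$ is a $trc$-partition with $k\ge r+2$ parts.
\begin{itemize}
\item No singleton $\{x\}$ is a transversal. Its complement has size at least $r+1$, contains a vertex of $U_2$, and so contains an edge. Thus every part has a partner.
\item If $A,B$ are partners, the other $k-2\ge r$ parts are nonempty, so $|\overline{A\cup B}|\ge r$. By the criterion, this forces $\overline{A\cup B}\subseteq U_2$, i.e.\ $U_1\subseteq A\cup B$.
\item At most $r-1$ parts meet $U_1$, so some part $Z$ is disjoint from $U_1$. Its partner $W$ must then contain all of $U_1$.
\item Then $\overline W\subseteq U_2$, so $W$ is itself a transversal. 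But $W$ is not a singleton, because $|U_1|=r-1\ge 2$. So $W$ is a non-singleton transversal part, which contradicts the definition of a $trc$-partition.
\end{itemize}
Therefore $k\le r+1$ and $C_\tau=r+1$.

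I expect this upper bound to be the main obstacle. Its key step is turning the size condition into the containment $U_1\subseteq A\cup B$ for every coalition pair, which is what produces the contradiction.
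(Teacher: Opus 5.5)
Your proof is correct. For (i), (ii) and the lower bound in (iii) it agrees with the paper. Your partition $\{\{u_1\},\dots,\{u_{r-1}\},\{v_1\},U_2\setminus\{v_1\}\}$ is the paper's construction with the roles of $U_1$ and $U_2$ exchanged. Identifying $K^r_{m,n}$ with $K^r_{r+1}$ in (ii) is a tidier form of the paper's direct observation that each vertex is absent from exactly one edge.

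The real difference is the upper bound in (iii). The paper takes its particular partition, splits $P_1$ into $P_{1,1}=\{u_1,\dots,u_{m-2}\}$ and $P_{1,2}=\{u_{m-1}\}$, and notes that this one refinement is not a $trc$-partition. That does not rule out an $(r+2)$-part $trc$-partition that is not a refinement of the constructed one. Your argument covers every partition:
\begin{itemize}
\item The complement criterion ($S$ is a transversal iff $|\overline S|\le r-1$ or $\overline S\subseteq U_2$) turns the presence of at least $r$ other nonempty parts into $U_1\subseteq A\cup B$ for every coalition pair.
\item Only $r-1$ parts can meet $U_1$, so some part is disjoint from $U_1$.
\item That part's partner must then contain $U_1$, so it is a transversal, which is the contradiction.
\end{itemize}
You pay for one small lemma, but you get an actual proof of maximality, and the same lemma makes all the lower-bound checks mechanical.

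One wording fix. When you show that singletons are not transversals, the relevant fact is that $\overline{\{x\}}$ meets $U_1$, not that it meets $U_2$. It does meet $U_1$, since it contains $U_1\setminus\{x\}$, which is nonempty because $r\ge 3$. Meeting $U_2$ proves nothing here, since every $r$-set meets $U_2$ when $|U_1|=r-1$.
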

\begin{proof}
Consider a complete bipartite $r$-uniform hypergraph $K_{m,n}^{r}$ with vertex set $U=U_1 \cup U_2$ where $U_1=\{ u_1, u_2,u_3,\ldots,u_m\}$ and $U_2=\{ v_1, v_2, v_3,\ldots, v_n\}$.\\
i) If \( m+n = r \), then we have a single edge containing \( r \) vertices. Each vertex constitutes a singleton transversal set. Hence, \( C_\tau(K_{m,n}^{r})=r \).\\
ii) For the case where \( m+n = r+1 \), we analyze a transversal coalition partition of the \( r \)-uniform hypergraph \( K_{m,n}^{r} \), which we represent as \( \varphi=\{P_{1}, P_{2}, P_{3},\ldots, P_{k}\} \). In this hypergraph, the total number of edges is $r+1$ and vertex \( u_1 \) has degree \( r \). Therefore, there exists a single edge \( e_i=\{u_2, u_3,\ldots, u_m, v_1, v_2,\ldots, v_n\} \) that does not include \( u_1 \). Consequently, the singleton set \( \{u_1\} \) can form a transversal coalition with each vertex present in the edge \( e_i \). It is important to note that in the edge set of \( K_{m,n}^{r} \), there is exactly one edge from which one vertex is absent. Therefore, each vertex can form a transversal coalition with all the other vertices in \( K_{m,n}^{r} \). This leads us to conclude that the transversal coalition partition can be expressed as \( \varphi = \{\{u_1\}, \{u_2\}, \{u_3\},\ldots, \{u_m\}, \{v_1\}, \{v_2\},\ldots, \{v_n\}\} \). In this structure, every member of \( \varphi \) forms a transversal coalition with all the remaining members of \( \varphi \). Hence, \( C_\tau(K_{m,n}^{r})=r+1 \).\\
iii) For \( m+n > r+1 \), consider a transversal coalition partition of \( K_{m,n}^{r} \) as \( \varphi=\{P_{1}, P_{2}, P_{3},\ldots, P_{k}\} \). Suppose \( n = r-1 \), then there is an edge \( e=\{u_m, v_1, v_2,\ldots, v_n\} \) that includes only one vertex from \( U_1 \) and the remaining \( (r-1) \) vertices from \( U_2 \). The other edges consist of vertices from \( \{u_1, u_2, u_3,\ldots,\\ u_{m-1}\} \). Clearly, we can partition \( U_1 \) as \( \{u_1, u_2, u_3,\ldots, u_{m-1}\} \cup \{u_m\} \) and \( U_2 \) as singletons. Moreover, \( \{u_1, u_2, u_3,\ldots, u_{m-1}\} \) will form a transversal coalition with each vertex from \( U_2 \) as well as $\{u_m\}$. This allows us to form a transversal coalition partition represented as \( \varphi=\{ P_1=\{u_1, u_2, u_3,\ldots, u_{m-1}\}, ~P_2=\{u_m\}, ~P_3=\{v_1\}, ~P_4=\{v_2\},\ldots, ~P_{r+1}=\{v_n\} \} \). Thus, \( k=1+ 1 + (r - 1) = r + 1 \). Hence, \( C_\tau(K_{m,n}^{r})=r + 1 \). Similarly we can prove that \( C_\tau(K_{m,n}^{r}) = r + 1 \) if $m=r-1$.\\
\indent Here, $P_1$ and $P_2$ are transversal coalition partners, and $P_3, P_4, \ldots, P_{r}, P_{r+1}$ form a transversal coalition with $P_{1}$. Suppose we partition $P_1$ as $P_{1,1}$ and $P_{1,2}$, where $P_{1,1}=\{u_1, u_2, u_3, u_4, \ldots, u_{m-2}\}$ and $P_{1,2}=\{u_{m-1}\}$. We then obtain a new $trc$-partition $\varphi'=\{P_{1,1}, P_{1,2}, P_2, P_3,\ldots, P_{r+1}\}$. Clearly $P_{1,1}$ doesn't form a transversal coalition with $P_{1,2}$ as there is an  edge $\{u_m, v_1, v_2,\ldots, v_{r-1}\}$ which is covered by neither $P_{1,1}$ nor $P_{1,2}$. Also, $P_{1,1}$ doesn't form a transversal coalition with any other set in $\varphi'$. Hence $(r+1)$ is the maximum order of a transversal coalition partition. This implies that $C_\tau(K_{m,n}^r) = r + 1$.
\end{proof}
\begin{definition}\cite{Bre} \textbf{(Complete \(r\)-partite \(r\)-uniform hypergraph)}\\
\indent A complete $r$-partite $r$-uniform hypergraph is a hypergraph whose vertex set is partitioned into $r$ pairwise disjoint parts $ U = U_1 \cup U_2 \cup\ldots \cup U_r,$
where $U_i \cap U_j=\varnothing$ for all $i \neq j$. Each hyperedge contains exactly one vertex from each part. Thus, the edge set is $\mathcal{E}=\bigl\{ \{u_1, u_2,\ldots, u_r\} \mid u_i \in U_i \text{ for } i = 1,2,\ldots,r \bigr\}$. The hypergraph is denoted by $K_r(U_1, U_2,\ldots, U_r)$. If $|U_i|=n_i$ for $i=1,2,\ldots,r$, it is also denoted by $K^{r}_{n_1,n_2,\ldots,n_r}$.
\end{definition}
\indent In the following theorem, we give bounds for the transversal coalition number of $K_{n_1, n_2,n_3,\ldots,n_r}^{r}$. 
\begin{theorem}\label{th4}
Let $K_{n_1, n_2,n_3,\ldots,n_r}^{r}$ be a complete $r$-partite $r$-uniform hypergraph then\\ $r\leq C_\tau(K_{n_1, n_2,n_3,\ldots,n_r}^{r})\leq2r$.    
\end{theorem}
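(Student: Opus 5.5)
The whole argument rests on one description of the transversals of $H=K_{n_1,n_2,\ldots,n_r}^{r}$.

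\emph{Key fact.} A set $T\subseteq U$ is a transversal of $H$ if and only if $T\supseteq U_i$ for some $i$. To see this, suppose $T$ contains no whole part. Then we can choose $x_i\in U_i\setminus T$ for every $i$. The edge $\{x_1,\ldots,x_r\}$ misses $T$, so $T$ is not a transversal. Conversely, every edge meets every part, so a set containing a whole part meets every edge.

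Two consequences follow.
\begin{itemize}
\item A singleton $\{v\}$ is a transversal exactly when $v$ alone forms a part with $n_i=1$.
\item Two disjoint non-transversal classes $P,Q$ are coalition partners exactly when $P\cup Q\supseteq U_i$ for some $i$.
\end{itemize}

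\textbf{Upper bound.} Let $\varphi$ be any $trc$-partition. We assign each class of $\varphi$ to a part index.
\begin{itemize}
\item A singleton transversal class $\{v\}$ has $\{v\}=U_i$ with $n_i=1$; assign it to that $i$.
\item A non-transversal class $P$ has a partner $Q$ with $P\cup Q\supseteq U_i$ for some $i$. Since $P$ is non-transversal, $P\not\supseteq U_i$, so $U_i\setminus P\neq\emptyset$ and $U_i\setminus P\subseteq Q$. Also $P\cap U_i\neq\emptyset$, since otherwise $Q\supseteq U_i$ would make $Q$ a transversal. Assign $P$ to this $i$.
\end{itemize}
Now fix $i$ and look at the classes that meet $U_i$.
\begin{itemize}
\item If some non-transversal $P$ is assigned to $i$ with partner $Q$, then $U_i\subseteq P\cup Q$. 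Because $\varphi$ is a partition, $P$ and $Q$ are the only classes meeting $U_i$.
\item If a singleton transversal $\{v\}$ is assigned to $i$, then $U_i=\{v\}$, so $\{v\}$ is the only class meeting $U_i$.
\end{itemize}
Every class assigned to $i$ meets $U_i$, so at most two classes are assigned to each $i$. Hence $|\varphi|\le 2r$.

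\textbf{Lower bound.} We construct a $trc$-partition with at least $r$ classes. Let $I=\{i: n_i\ge 2\}$ and $k=|I|$. For every $i\notin I$, take the singleton transversal $\{u\}=U_i$ as a class.
\begin{itemize}
\item If $k=0$, all $r$ classes are singleton transversals, giving exactly $r$ classes.
\item If $k=1$, say $I=\{j\}$, split $U_j$ into two nonempty sets $A,B$. Neither contains a whole part, so both are non-transversal, and $A\cup B=U_j$ makes them partners. This gives $r+1$ classes.
\item If $k\ge 2$, order $I$ cyclically as $i_1,\ldots,i_k$ and choose $x_{i_t}\in U_{i_t}$. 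Put
\[P_t=(U_{i_t}\setminus\{x_{i_t}\})\cup\{x_{i_{t+1}}\},\]
with indices taken mod $k$. These $k$ sets partition $\bigcup_{i\in I}U_i$.
\begin{itemize}
\item Each $P_t$ is non-transversal. It misses $x_{i_t}$, so it does not contain $U_{i_t}$. It meets $U_{i_{t+1}}$ only in the single vertex $x_{i_{t+1}}$, while $|U_{i_{t+1}}|\ge 2$. It contains no vertex of any size-one part.
\item $P_{t-1}$ and $P_t$ are partners, since $P_{t-1}\cup P_t\supseteq (U_{i_t}\setminus\{x_{i_t}\})\cup\{x_{i_t}\}=U_{i_t}$.
\end{itemize}
Together with the $r-k$ singleton transversals this gives exactly $r$ classes.
\end{itemize}
In every case $C_\tau(H)\ge r$.

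The only delicate point is the interaction between size-one parts and the cyclic construction. This is why parts with $n_i=1$ are handled separately as singleton transversals, so that no non-singleton class ever contains a whole part.
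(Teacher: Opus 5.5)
Your proof is correct, and it takes a genuinely different route from the paper.

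The paper splits into cases on the part sizes. All parts singletons gives $r$; one singleton part gives $2r-1$; no singleton parts gives $2r$. In each case it halves every part of size at least two. It supports maximality only by checking that one particular further split of one class breaks the coalition property. That does not rule out other partitions with more classes, and the case of two or more singleton parts (but not all) is never treated.

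You instead prove the lemma that the transversals are exactly the supersets of a whole part $U_i$. Your charging argument then bounds every $trc$-partition at once, with no case split: each class is charged to a part it meets, and a coalition pair covering $U_i$ leaves no room for a third class meeting $U_i$. Your lower-bound construction is likewise uniform in the parameters.

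The cyclic construction for $k\ge 2$ is correct but more elaborate than needed. Halving each $U_i$ with $n_i\ge 2$ into two nonempty pieces is valid by your lemma and already gives $r+k$ classes.

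Your counting also sharpens itself: a part with $n_i=1$ can meet only one class, so your argument gives $C_\tau \le (r-k)+2k=r+k$. Together with the halving construction this determines the exact value
\[
C_\tau\bigl(K^r_{n_1,\ldots,n_r}\bigr)=r+\bigl|\{i : n_i\ge 2\}\bigr|.
\]
This recovers the paper's case values $r$, $2r-1$, $2r$ rigorously and covers the mixed cases it omits.
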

\begin{proof}
Let \( K_{n_1, n_2, n_3,\ldots, n_r}^{r} \) be a complete \( r \)-partite \( r \)-uniform hypergraph. Let \( U=U_1\cup U_2 \cup\ldots\cup U_r\) be the vertex set where \\
$U_1=\{ u_{1,1}, u_{1,2}, u_{1,3},\ldots, u_{1,n_1}\}$,\\
$ U_2=\{ u_{2,1}, u_{2,2}, u_{2,3},\ldots, u_{2,n_2} \}$,\\
$U_3=\{ u_{3,1}, u_{3,2}, u_{3,3},\ldots, u_{3,n_3} \}$,\\
\vdots \\
$U_r=\{ u_{r,1}, u_{r,2}, u_{r,3},\ldots, u_{r,n_r} \}$ with \( |U_i| = n_i, ~i=1,2,3,\ldots,r\).\\
\indent Let $\varphi=\{P_1, P_2,\ldots,P_k\}$ be the transversal coalition partition of $K_{n_1, n_2,n_3,\ldots,n_r}^{r}$. If \( \left| U_i \right| = 1 \), \(\forall~ i=1, 2,\dots, r \), then we get a single edge containing \( r \) vertices. Here, every vertex is a singleton transversal set. Therefore, \( C_\tau(K_{n_1, n_2, n_3,\dots, n_r}^{r})=r. \) \\
\indent If $|U_i|=1$ for at least one $i$, and $|U_j| \geq 2$ for each $j$ with $j=1,2,\ldots, r$ and $i \neq j$, then $U_i$ serves as a singleton transversal set. Additionally, we can partition each $U_j$ into two subsets, $U_{j,1}$ and $U_{j,2}$, such that $U_{j,1} \cup U_{j,2}$ forms a transversal set for $1 \leq j \leq r, ~j\neq i$. This leads to the conclusion that \( k=1 + 2(r - 1) = 2r - 1 \). Hence, \( C(K_{n_1, n_2, n_3,\ldots, n_r}^{r})=2r - 1 \). \\
\indent If  \( \left| U_i \right| \geq 2 \) for each $i=1,2,\ldots, r$, then we need to partition each $ U_i $ into two subsets $U_{i,1}$ and $U_{i,2}$ where $U_{i,1}\cup U_{i,2}$, $1\leq i\leq r$ is a transversal set. Let $\omega=\{U_{1,1}, U_{1,2}, U_{2,1}, U_{2,2},\ldots, U_{r,1}, U_{r,2}\}$ where $U_{1,1}=\{u_{1,1}, u_{1,3}, u_{1,5},\ldots, u_{1,r}\}$, $U_{1,2}=\{u_{1,2}, u_{1,4}, u_{1,6},\ldots, u_{1,r-1}\},\ldots,  U_{r,1}=\{u_{r,1}, u_{r,3}, u_{r,5},\\\ldots, u_{r,r}\}$ and $U_{r,2}=\{u_{r,2}, u_{r,4}, u_{r,6},\ldots, u_{r,r-1}\}$. Here $U_{1,1}$ forms a transversal coalition with $U_{1,2}$; $U_{2,1}$ forms a transversal coalition with $U_{2,2};\ldots;U_{r,1}$ forms a transversal coalition with $U_{r,2}.$ Therefore, $\omega$ is a transversal coalition partition. Thus, \( C_\tau(K_{n_1, n_2, n_3,\ldots, n_r}^{r})=2r \).\\
\indent Suppose we partition $U_{1,1}$ as $W_{1,1}$ and $W_{1,2}$, where $W_{1,1}=\{u_{1,1}, u_{1,5},\ldots, u_{1,r-2}\}$ and $W_{1,2}=\{u_{1,3}, u_{1,7},\ldots, u_{1,r}\}$. Let $\varphi'=\{W_{1,1}, W_{1,2}, U_{1,2} , U_{2,1}, U_{2,2}, \ldots, U_{r,1}, U_{r,2} \}$. There is an edge $\{u_{1,2},\\ u_{2,2}, u_{3,2},\ldots, u_{r,2}\}$ in which the vertices are covered by neither $W_{1,1}$ nor $W_{1,2}$. Also, it doesn't form a transversal coalition with any other set in $\varphi'$. Therefore, $\varphi'$ is not a transversal coalition partition. Hence $\varphi$ is the only partition with maximum order. Therefore, $ C(K_{n_1, n_2, n_3, \dots, n_r}^{r})=2r.$  
\end{proof}
\section{Transversal Coalitions in paths and cycles}
\hspace{\parindent} Now we will determine the transversal coalition numbers of all linear paths and linear cycles of an $r$-uniform hypergraph.
\begin{definition}\cite{Bre} Let \( H=(U, \mathcal{E}) \) be a hypergraph without isolated vertices. A path \( P \) in \( H \) from a vertex \( x \) to a vertex \( y \) is a sequence of the form
$x=x_1, e_1, x_2, e_2,\ldots, x_s, e_s, x_{s+1}=y$
in which \( x_1, x_2,\ldots, x_s, x_{s+1} \) are distinct vertices, and \( e_1, e_2,\ldots, e_s \) are distinct hyperedges, such that each consecutive pair of vertices \( x_i \) and \( x_{i+1} \) belongs to the corresponding hyperedge \( e_i \) for all \( i=1, 2,\ldots, s \). \\
\indent If $x = x_1 = x_{s+1} = y$ the path is called as cycle. The integer $s$ is the length of the path $P$.
\end{definition}
\begin{definition}
A linear path in a \( r \)-uniform hypergraph \( H = (U, \mathcal{E}) \) is a sequence of distinct hyperedges $P = (e_1, e_2,\ldots, e_k)$
such that for every \( i \) with \( 1 \leq i < k \), the intersection of consecutive hyperedges \( e_i \) and \( e_{i+1} \) contains exactly one vertex, that is, \( |e_i \cap e_{i+1}| = 1 \), and for any pair of hyperedges \( e_i \) and \( e_j \) with indices satisfying \( |i - j| > 1 \), the intersection is empty, meaning \( e_i \cap e_j = \emptyset \).
\end{definition}
\begin{definition}
A linear cycle in an \( r \)-uniform hypergraph \( H = (U, \mathcal{E}) \) is a sequence of distinct hyperedges $C=(e_1, e_2, \ldots, e_k),$ with $k \geq 2$, such that for every \( i \) with \( 1 \leq i < k \), the intersection \( e_i \cap e_{i+1} \) contains exactly one vertex, that is, \( |e_i \cap e_{i+1}| = 1 \), the intersection \( e_k \cap e_1 \) also contains exactly one vertex, ensuring the cycle closes, and for all other pairs of indices \( i, j \) with \( j \neq i \pm 1 \pmod{k} \), the intersection \( e_i \cap e_j \) is empty.   
\end{definition}
\indent The next theorem gives the transversal coalition number of a linear path $P_n$ of length $n$ in an $r$-uniform hypergraph.
\begin{theorem}\label{th5} For a linear path $ P_n$ of length $n$ in an $r$-uniform hypergraph, $C_\tau(P_n)=2r, ~n>2$
\end{theorem}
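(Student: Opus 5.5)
The plan is to prove the two inequalities $C_\tau(P_n)\ge 2r$ and $C_\tau(P_n)\le 2r$ separately. Write the linear path as $P_n=(e_1,\dots,e_n)$ with $n>2$ and let $w_i$ denote the unique vertex of $e_i\cap e_{i+1}$ for $1\le i<n$. Every other vertex has degree $1$. Since $n>2$, the edges $e_1$ and $e_3$ are disjoint, so no vertex covers every edge. Hence no part of a $trc$-partition can be a singleton transversal, and every part must have a coalition partner.

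\emph{Lower bound (construction).} Let $S_1=e_1\setminus\{w_1\}$ and $S_n=e_n\setminus\{w_{n-1}\}$; each has $r-1$ vertices of degree $1$. I would build two disjoint sets $A$ and $B$ with these properties:
\begin{itemize}
\item $A$ meets $e_2,\dots,e_n$ but not $e_1$;
\item $B$ meets $e_1,\dots,e_{n-1}$ but not $e_n$.
\end{itemize}
To do this, put $w_1\in B$ and $w_{n-1}\in A$. Then, for each middle edge $e_i$ with $2\le i\le n-1$, put one of its vertices in $A$ and a different one in $B$; this is possible because $r\ge 2$ and $w_1\neq w_{n-1}$ when $n>2$. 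Absorb all leftover vertices outside $S_1\cup S_n$ into $A$ or $B$ without breaking these properties.

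The partition is then the $r-1$ singletons of $S_1$, the $r-1$ singletons of $S_n$, and the two sets $A$ and $B$. The checks are:
\begin{itemize}
\item each $\{x\}$ with $x\in S_1$ forms a transversal coalition with $A$;
\item each $\{y\}$ with $y\in S_n$ forms a transversal coalition with $B$;
\item $A$ and $B$ are partners of each other;
\item no part is a transversal.
\end{itemize}
This gives $2(r-1)+2=2r$ parts.

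\emph{Upper bound.} The idea is to count parts according to how they meet the two end edges. At most $r$ parts meet $e_1$ and at most $r$ parts meet $e_n$. A part $X$ missing some edge $f$ needs a partner meeting $f$. The strategy I would try first is to show that every part meets $e_1\cup e_n$, and that the parts meeting $e_1$ and those meeting $e_n$ overlap enough to give a total of at most $2r$. The argument would be an exchange argument in the style of Theorems~\ref{th1}--\ref{th4}: assume a partition with more than $2r$ parts, split or merge parts, and derive an edge covered by neither member of some required coalition.

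\emph{Main obstacle.} I expect this upper bound to be the real difficulty, and I am not confident it holds as stated. For long paths, consider a middle edge $e_k$ together with a set $C$ that meets every edge except $e_k$, chosen disjoint from $A$ and $B$. The $r-2$ degree-$1$ vertices of $e_k$, taken as singletons, could then all partner with $C$. This needs each edge other than $e_k$ to contain three distinct vertices lying in $A$, $B$ and $C$ respectively, which seems feasible for $r\ge 3$. It would give more than $2r$ parts.

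So the first thing I would do is test this three-set construction on a small case, for instance $r=3$, $n=5$, $k=3$. If it succeeds, the statement needs an extra restriction, such as small $n$ or $r=2$, or the value must be revised. If it fails, the reason it fails will indicate which structural lemma forces every part to meet $e_1\cup e_n$, and that lemma would then carry the upper bound.
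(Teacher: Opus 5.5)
Your lower-bound construction works for $r\ge 3$ and is a valid alternative to the paper's mod-$4$/parity partition. It fails for $r=2$ and even $n$: $w_1\in B$ forces $w_2\in A$, $w_3\in B$, and so on, so $w_{n-1}\in B$. The real gap is the upper bound, and it cannot be closed, because the statement is false for every $r\ge 4$.

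The paper's maximality argument only checks that splitting $U_5$ of one particular partition fails. That does not bound the order of other partitions, and refining the paper's own $P_3$ partition already refutes it. In the paper's notation for $P_3$, keep $\{x_{1,1},x_{3,1}\}$ and make every other vertex a singleton. Then:
\begin{itemize}
\item $\{u_1\}$ and each $\{x_{1,j}\}$ ($j\ge 2$) meet only $e_1$ and partner $\{u_3\}$;
\item $\{u_4\}$ and each $\{x_{3,j}\}$ ($j\ge 2$) meet only $e_3$ and partner $\{u_2\}$;
\item $\{u_2\}$ and $\{u_3\}$ partner each other;
\item each $\{x_{2,j}\}$ meets only $e_2$ and partners $\{x_{1,1},x_{3,1}\}$, which misses $e_2$.
\end{itemize}
No part is a transversal, so this is a $trc$-partition of order $3r-3$. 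Since $3r-3>2r$ for $r\ge 4$, this is a counterexample; for $r=4$ it has nine parts on ten vertices.

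In fact $C_\tau(P_3)=\max\{2r,3r-3\}$. If parts missing exactly $e_1$, exactly $e_2$ and exactly $e_3$ all occur, sum the three constraints ``at most $r$ parts meet $e_i$''. This counts each part that misses one edge twice, so there are at most $3r-3$ parts. If the kind missing exactly $e_i$ is absent, then no part can miss exactly the other two edges, since it would have no partner. Every remaining part meets one of the two edges $e_j$, $j\ne i$, so there are at most $2r$ parts.

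Your three-set idea points the right way, but as written it is infeasible. If all of $e_1\setminus\{w_1\}$ are singletons, only $w_1$ remains for $B$ and $C$, and both must meet $e_1$; the same happens at $e_n$. Indeed your count $3r-1$ exceeds $C_\tau(P_3)$.

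Giving up one singleton at each end edge yields $3(r-2)+3=3r-3$ parts for every $n\ge 3$ once $r\ge 4$. Take $k=2$ and proceed as follows:
\begin{itemize}
\item put $w_1$ in $B$ and $w_2,\dots,w_{n-1}$ in $A$;
\item give $C$ one degree-one vertex from each of $e_1,e_3,\dots,e_n$;
\item give $B$ a second degree-one vertex from each of $e_3,\dots,e_{n-1}$, and put the leftover degree-one vertices of these middle edges into $A$;
\item keep the remaining $r-2$ degree-one vertices of $e_1$, $e_2$, $e_n$ as singletons, partnering $A$, $C$, $B$ respectively.
\end{itemize}
Because $3r-3=2r$ exactly when $r=3$, your proposed test case $r=3$, $n=5$ would reveal nothing. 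The structural lemma you would then look for is also false. The paper's $P_3$ partition is maximum when $r=3$, yet its part $\{x_{2,1}\}$ misses $e_1\cup e_3$.

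So the correct conclusion is that the theorem fails as stated and can hold at best for $r\le 3$. No exchange argument can prove the bound $2r$ in general.
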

\begin{proof}
Let $P_n$ be a linear path of length $n$ in an $r$-uniform hypergraph, and let $\varphi=\{U_1,U_2,U_3,\ldots,U_k\}$ be the transversal coalition partition of $P_n$. Consider the linear path $P_3=u_1 e_1 u_2 e_2 u_3 e_3 u_4$ with edge set\\
$\mathcal{E}=\{e_1=\{u_1,u_2,x_{1,1},\ldots,x_{1,r-2}\},
~e_2=\{u_2,u_3,x_{2,1},\ldots,x_{2,r-2}\},
~e_3=\{u_3,u_4,x_{3,1},\ldots,x_{3,r-2}\}\}.$\\
Since no single vertex covers $P_3$, a pair of non-adjacent vertices covers $P_3$. The partition consisting of each vertex $u_i$ as a singleton and the remaining $(r-2)$ vertices of each edge arranged as singletons and doubletons forms a transversal coalition partition of $P_3$ of order $2r$. Specifically, $\varphi=\{\{u_1\},\{u_2\},\{u_3\},\{u_4\},\{x_{1,1},x_{3,1}\},\{x_{2,1}\},\{x_{1,2},x_{3,2}\},\{x_{2,2}\},\ldots,\{x_{1,r-2},x_{3,r-2}\},\{x_{2,r-2}\}\}.$\\
Here, $\{u_1\}$ forms a transversal coalition with $\{u_3\}$; $\{u_2\}$ forms a transversal coalition with $\{u_3\}$ and $\{u_4\}$; $\{x_{1,1},x_{3,1}\}$ forms a transversal coalition with $\{u_2\}$, $\{u_3\}$, and $\{x_{2,1}\}$; and, in general, $\{x_{1,r-2},\\x_{3,r-2}\}$ forms a transversal coalition with $\{x_{2,r-2}\}$. Hence, the transversal coalition number is $C_\tau(P_3)=2(r-2)+4=2r.$\\
 \indent Consider the path $P_4=u_1 e_1 u_2 e_2 u_3 e_3 u_4 e_4 u_5$, with edge set
 $\mathcal{E}=\{e_1 \mathord{=} \{u_1, u_2, x_{1,1},\ldots, x_{1,r-2}\},\\
 ~e_2=\{u_2, u_3, x_{2,1},\ldots, x_{2,r-2}\},
 ~e_3=\{u_3, u_4,  x_{3,1},\ldots, x_{3,r-2}\},
 ~e_4=\{u_4, u_5, x_{4,1},\ldots, x_{4,r-2}\}$. If the partition of \( P_4 \) includes each vertex \( u_i \) as a singleton, then there will be at least one edge not covered by the union of any two singletons \( \{u_i\} \) and \( \{u_j\} \), where \( i \ne j \). To ensure coverage, we must merge a fifth vertex with one of the singleton sets. Since the path length is even, the remaining \( (r - 2) \) vertices of each edge in \( P_4 \) must be grouped as doubletons in the partition \( \varphi \), for \( \varphi \) to qualify as a transversal coalition partition. The transversal coalition partition is $\varphi=\{\{u_{1},u_{5}\}, \{u_{2}\}, \{u_{3}\}, \{u_{4}\}, \{x_{1,1}, x_{3,1}\}, \{x_{2,1}, x_{4,1}\},\ldots,\\ \{x_{1,r-2}, x_{3,r-2}\}, \{x_{2,r-2}, x_{4,r-2}\} \}$. Here $\{u_{1},u_{5}\}$ forms a transversal coalition with $\{u_{3}\}$; $\{u_{2}\}$ forms a transversal coalition with $\{u_{4}\}$; $\{x_{1,1},x_{3,1}\}$ forms a transversal coalition with $\{x_{2,1}, x_{4,1}\};\ldots;\{x_{1,r-2},\\x_{3,r-2}\}$ forms a transversal coalition with $\{x_{2,r-2}, x_{4,r-2}\}$. The transversal coalition number $C_\tau(P_4)=2(r-2)+4 = 2r$.\\
\indent  Now, consider the path $P_5=u_1 e_1 u_2e_2u_3e_3u_4e_4u_5e_5u_6$, with edge set 
$\mathcal{E}=\{e_1=\{u_1,u_2,x_{1,1}\!,\ldots\!,\\x_{1,r-2}\},
~e_2=\{u_2,u_3,x_{2,1}\!,\ldots\!,x_{2,r-2}\},
~e_3=\{u_3,u_4,x_{3,1}\!,\ldots\!,x_{3,r-2}\},
~e_4=\{u_4,u_5,x_{4,1}\!,\ldots\!,x_{4,r-2}\},\\
~e_5=\{u_5,u_6,x_{5,1}\!,\ldots\!,x_{5,r-2}\}\}.$ For \( \varphi \) to be a valid \( C_\tau(P_5) \) partition, placing all vertices \( u_i \) as singletons leads to at least one edge being uncovered by the union of any two singletons \( \{u_i\} \) and \( \{u_j\} \), where \( i \ne j \). To achieve full coverage, the fifth and sixth vertices must be merged with adjacent singleton vertices. Given that the path \( P_5 \) has length five, the remaining \(( r - 2) \) vertices from each edge must be organized into doubletons and tripletons within the partition   \( \varphi \), ensuring that \( \varphi \) satisfies the condition of being a transversal coalition partition. The transversal coalition partition is $\varphi=\{\{u_1, u_5\}, \{u_2, u_6\}, \{u_3\}, \{u_4\},\{x_{1,1}, x_{3,1}, x_{5,1}\}, \{x_{2,1}, x_{4,1}\}, \ldots, \{x_{1,r-2}, x_{3,r-2}, x_{5,r-2}\}, \{x_{2,r-2},\\ x_{4,r-2}\}\}$. Here $\{u_{1},u_{5}\}$ forms a transversal coalition with $\{u_{3}\}$; $\{u_{2}, u_{6}\}$ forms a transversal coalition with $\{u_{4}\}$; $\{x_{1,1},x_{3,1}, x_{5,1}\}$ forms a transversal coalition with $\{x_{2,1}, x_{4,1}\};\ldots;\{x_{1,r-2},x_{3,r-2},x_{5,r-2}\}$ forms a transversal coalition with $\{x_{2,r-2}, x_{4,r-2}\}$. The transversal coalition number $C_\tau(P_5)=2(r-2) + 4 = 2r$.\\
\indent In general, for any $n$ consider a path $P_n=u_1 e_1 u_2e_2u_3e_3u_4e_4u_5e_5u_6e_6\ldots e_{n}u_{n+1} $ with edge set \\$\mathcal{E}=\{e_1=\{u_1, u_2, x_{1,1}\!,\ldots, x_{1,r-2}\}\!,
~ e_2=\{u_2, u_3, x_{2,1},\ldots, x_{2,r-2}\}\!,
~e_3=\{u_3, u_4, x_{3,1},\ldots, x_{3,r-2}\}\!,\\
~e_4=\{u_4,u_5, x_{4,1},\ldots, x_{4,r-2}\}\!,
~e_5=\{u_5, u_6, x_{5,1},\ldots, x_{5,r-2}\}\!,
\ldots, ~e_{n}=\{u_n, u_{n+1}, x_{n,1},\ldots, x_{n,r-2}\}\!\}$.\\
Let the transversal coalition partition be $\varphi=\{U_1, U_2, U_3, U_4, U_5, U_6,\ldots, U_{2r-1},U_{2r}\}$.\\
{\bf Case 1:} If $n$ is odd. Then we can consider 
$U_1=\{u_i|(i-1)\equiv 0 \pmod{4}\}$,\\
$U_2=\{u_i|(i-1)\equiv 1 \pmod{4}\}$,\\
$U_3=\{u_i|(i-1)\equiv 2 \pmod{4}\}$,\\
$U_4=\{u_i|(i-1)\equiv 3 \pmod{4}\}$,\\
$U_5=\{x_{1,1}, x_{3,1},x_{5,1},\ldots, x_{n,1}\}$,\\
$U_6=\{x_{2,1}, x_{4,1},x_{6,1},\ldots, x_{n-1,1}\},\\ \vdots  $\\
$U_{2r-1}=\{x_{1,r-2}, x_{3,r-2},x_{5,r-2},\ldots, x_{n,r-2}\}$,\\
$U_{2r}=\{x_{2,r-2}, x_{4,r-2},x_{6,r-2},\ldots, x_{n-1,r-2}\}.$\\
\indent Here $U_1$ forms a coalition with $U_3$; $U_2$ forms a transversal coalition with $U_4$; $U_5$ forms a transversal coalition with $U_6$; In a similar manner $U_{2r-1}$ forms a transversal coalition with $U_{2r}$. Therefore, $C_\tau(P_n)=2r$.\\
{\bf Case 2:} If $n$ is even. Then we can consider 
$U_1=\{u_i|(i-1)\equiv 0 \pmod{4}\}$,\\
$U_2=\{u_i|(i-1)\equiv 1 \pmod{4}\}$,\\
$U_3=\{u_i|(i-1)\equiv 2 \pmod{4}\}$,\\
$U_4=\{u_i|(i-1)\equiv 3 \pmod{4}\}$,\\
$U_5=\{x_{1,1}, x_{3,1},x_{5,1},\ldots, x_{n-1,1}\},$\\
$U_6=\{x_{2,1}, x_{4,1},x_{6,1},\ldots, x_{n,1}\},\\ \vdots  $\\
$U_{2r-1}=\{x_{1,r-2}, x_{3,r-2},x_{5,r-2},\ldots, x_{n-1,r-2}\}$,\\
$U_{2r}=\{x_{2,r-2}, x_{4,r-2},x_{6,r-2},\ldots, x_{n,r-2}\}.$\\
\indent Here $U_1$ forms a transversal coalition with $U_3$; $U_2$ forms a transversal coalition with $U_4$; $U_5$ forms a transversal coalition with $U_6$; In a similar manner $U_{2r-1}$ forms a transversal coalition with $U_{2r}$. Therefore, $C_\tau(P_n)=2r$.\\
\indent Suppose we partition the set $U_5$ as $U_{5,1}$ and $U_{5,2}$. Let $\varphi'=\{U_1, U_2, U_3, U_4, U_{5,1},U_{5,2}, U_6,\ldots, U_{2r-1},\\ U_{2r}\}$ be the new transversal coalition partition. Observe that $U_{5,1}$ and $U_{5,2}$ do not form a transversal coalition with any other set in $\varphi'$. Furthermore, the set \( U_6 \) does not have a transversal coalition partner in \(\varphi'\). This leads to a contradiction, as \( \varphi'\) cannot be a transversal coalition partition. More generally, if we construct another partition of maximum order by taking proper subsets of any sets in \( \varphi \), then the resulting partition will necessarily contain at least one set that has no transversal coalition partner. Hence, \( \varphi \) is the maximum order transversal coalition partition. Therefore, $ C_\tau(P_n)=2r$.	
\end{proof}
\par In the next theorem, we determine the transversal coalition number of a linear cycle $C_n$ of length $n$ in an $r$-uniform hypergraph.
\begin{theorem}\label{th6}
For a linear cycle $C_n$ in an $r$-uniform hypergraph of length $n$ where $n>2$, $C_\tau(C_n)=3(r-1)$, if  $n$-odd and $C_\tau(C_n)=2r$, if $n$-even.
\end{theorem}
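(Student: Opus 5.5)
I would follow the same strategy the paper uses for Theorem \ref{th5}: label the cycle and give an explicit partition in each parity case, then argue that no finer partition survives. Write the linear cycle as $C_n=u_1e_1u_2e_2\cdots u_ne_nu_1$ with
\[
e_i=\{u_i,u_{i+1},x_{i,1},\ldots,x_{i,r-2}\}, \qquad u_{n+1}=u_1.
\]
The $u_i$ are the $n$ junction vertices, each of degree $2$. The $x_{i,j}$ are the $n(r-2)$ vertices of degree $1$. A set is a transversal exactly when it meets every $e_i$, and each $e_i$ can only be met through $u_i$, $u_{i+1}$ or some $x_{i,j}$.

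\textbf{Even $n$.} I would mimic Case 2 of Theorem \ref{th5}.
\begin{itemize}
\item For each $j=1,\ldots,r-2$, put $A_j=\{x_{i,j}: i \text{ odd}\}$ and $B_j=\{x_{i,j}: i \text{ even}\}$. Neither is a transversal, since $A_j$ misses the even-indexed edges and $B_j$ the odd-indexed ones, but $A_j\cup B_j$ is.
\item Split the junction vertices by index modulo $4$ into four classes $U_1,\ldots,U_4$. The junction vertices with odd index form a transversal on their own (even $n$ makes the alternation close up), and so do those with even index. So $U_1\cup U_3$ and $U_2\cup U_4$ are transversals, while no single class is.
\item This gives $4+2(r-2)=2r$ parts. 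When $4\nmid n$ the wrap-around breaks the mod-$4$ pattern, so I would redistribute the last few junction vertices among the four classes, while keeping each odd/even pair a transversal.
\end{itemize}

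\textbf{Odd $n$.} The target $3(r-1)=3+3(r-2)$ suggests three junction classes plus three classes for each of the $r-2$ "columns" of degree-$1$ vertices.
\begin{itemize}
\item Colour the edges $e_1,\ldots,e_n$ cyclically with three colours so that consecutive edges get different colours; this is possible for odd $n\ge 3$.
\item For each $j$, split $\{x_{1,j},\ldots,x_{n,j}\}$ by the colour of its edge into three parts $X_j^1,X_j^2,X_j^3$. The union of any two misses the third colour class of edges, so these parts need partners elsewhere.
\item That suggests organising the partners so that $X_j^a\cup X_j^b$ together with a suitable junction class covers everything, i.e.\ placing junction vertices into three classes $W_1,W_2,W_3$ whose pairwise unions cover specific colour classes of edges.
\end{itemize}
I would first check $n=3$ by hand, where $r$ edges share a triangle of junctions and $3(r-1)$ equals the order minus $3$. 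That case should reveal the right grouping: perhaps each singleton $x_{i,j}$ partnered with $\{u_{i+2}\}$ together with a merged set.

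\textbf{Upper bound.} Here I would count the parts that can have partners. Any two coalition partners $P,Q$ satisfy $P\cup Q\supseteq$ a transversal, and every vertex of degree $1$ in $e_i$ lies in only one edge. The plan is to show that each "column" $\{x_{1,j},\ldots,x_{n,j}\}$ together with its share of junction vertices supports at most $2$ parts (even $n$) or $3$ parts (odd $n$). That would mimic the refinement arguments of Theorems \ref{th1}--\ref{th5}: splitting any part further leaves some part with no partner.

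\textbf{Main obstacle.} The main obstacle is this upper bound, and I expect it to be genuinely delicate. Naive singleton-heavy partitions seem to give more parts: for $n=3$, each $\{x_{i,j}\}$ partnered with $\{u_{i+2}\}\cup\ldots$ can potentially beat $3(r-1)$. So I would first test small cases ($r=3$, $n=3,4$) exhaustively, both to confirm the formula and to find the exact structural constraint that limits the count before writing the general argument.
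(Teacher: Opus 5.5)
\textbf{There is a genuine gap, and it cannot be closed: the statement is false.} The gap is where you expected it, in the upper bound.

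For even $n$ the claim fails for every $r\ge 4$. Take $n=4$, $r=4$, $e_i=\{u_i,u_{i+1},x_{i,1},x_{i,2}\}$ with $u_5=u_1$, and the partition
\[
\{u_3,x_{4,1}\},\ \{u_1,x_{3,1}\},\ \{u_2\},\ \{u_4\},\ \{x_{3,2},x_{4,2}\},\ \{x_{1,1}\},\ \{x_{1,2}\},\ \{x_{2,1}\},\ \{x_{2,2}\}.
\]
\begin{itemize}
\item The first part misses only $e_1$ and the second misses only $e_2$, so these two are partners of each other.
\item The first is also a partner of $\{x_{1,1}\}$ and $\{x_{1,2}\}$; the second is a partner of $\{x_{2,1}\}$ and $\{x_{2,2}\}$.
\item $\{u_2\}$ is a partner of $\{u_4\}$ and of $\{x_{3,2},x_{4,2}\}$.
\end{itemize}
That is $9>8=2r$ parts.

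The same pattern works for every $n\ge 4$ of either parity. Split $e_3\cup\dots\cup e_n$ into $r$ pairwise disjoint transversals of the path $e_3,\dots,e_n$, with $u_3\in Q_1$ and $u_1\in Q_2$. To do this, assign $u_3,u_4,\dots,u_n,u_1$ to classes so that consecutive ones differ. Then give the $r-2$ private vertices of each $e_i$ one to each class not containing $u_i$ or $u_{i+1}$. Now $Q_1$ misses exactly $e_1$ and $Q_2$ misses exactly $e_2$. Add $\{u_2\}$ and the $2(r-2)$ private vertices of $e_1,e_2$ as singletons. This gives $3(r-1)$ parts, which exceeds $2r$ once $r\ge4$.

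The odd formula is not an upper bound either. For $n=5$, $r=6$, the parts $\{u_3,u_5,x_{5,3},x_{5,4}\}$, $\{u_1,u_4\}$, $\{u_2,x_{4,1},x_{5,1}\}$ and $\{x_{1,1},x_{2,1},x_{3,1},x_{5,2}\}$ miss exactly $e_1,e_2,e_3,e_4$ respectively. Make the twelve remaining private vertices of $e_1,\dots,e_4$ singletons, each partnered with the part missing its edge. This gives $16>15=3(r-1)$ parts.

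So the missing ingredient is not a sharper counting lemma. Your premise that a ``column'' $\{x_{1,j},\dots,x_{n,j}\}$ plus its share of junctions supports at most $2$ or $3$ parts is false. In the $n=4$ example, column $1$ meets four different parts, and good parts freely mix junctions with private vertices of different columns. Your fallback, showing that splitting any part of your partition leaves some part without a partner, would only show that one particular partition cannot be refined, not that it has maximum order. That is exactly the defect in the paper's own argument, which splits $P_2$ of its chosen partition and stops. Following the paper would not have rescued you.

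What survives is the lower-bound side:
\begin{itemize}
\item Your even construction is valid. The mod-$4$ classes need no repair when $4\nmid n$, since two of the classes together are exactly the odd-indexed junctions and the other two the even-indexed ones.
\item For odd $n$, which you left open, the paper's partition $\{u_1\}$, $\{u_2,u_4,\dots,u_{n-1}\}$, $\{u_3,u_5,\dots,u_n\}$, $\{x_{1,j}\}$, $\{x_{2,j},\dots,x_{n-1,j}\}$, $\{x_{n,j}\}$ gives $3(r-1)$ parts.
\item Your $n=3$ worry is unfounded: $C_3$ has exactly $3(r-1)$ vertices, and the all-singleton partition is a $trc$-partition.
\end{itemize}
Altogether $C_\tau(C_n)\ge 3(r-1)$ for all $n\ge 3$, but neither stated equality holds in the generality claimed.
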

\begin{proof}
Let $C_n$ be a linear cycle of length $n$ in an $r$-uniform hypergraph. Let $\varphi$ = \{$P_{1}, P_{2}, P_{3},\ldots,P_{k}$\} be the transversal coalition partition of $C_n$. Consider a cycle $C_5 = u_1 e_1 u_2e_2u_3e_3u_4e_4u_5e_5u_1$ with edge set
$\mathcal{E}=\{e_1 = \{u_1, u_2, x_{1,1},\ldots, x_{1,r-2}\}, 
~e_2=\{u_2, u_3, x_{2,1},\ldots, x_{2,r-2}\},
~ e_3=\{u_3, u_4,  x_{3,1},\ldots, x_{3,r-2}\},\\
~ e_4=\{u_4, u_5, x_{4,1},\ldots, x_{4,r-2}\},
~ e_5=\{u_5, u_1, x_{5,1},\ldots, x_{5,r-2}\}\}$.
In a cycle $C_5$, each edge connects two vertices and includes those two vertices along with \(( r - 2) \) other distinct vertices. For \( \varphi \) to be a valid \( C_\tau(C_5) \) partition, placing all vertices \( u_i\) as singletons results in at least one edge being uncovered by the union of any two singletons \( \{u_i\} \) and \( \{u_j\} \), where \( i \ne j \). To ensure full coverage, two pairs of non-adjacent vertices must be merged, while one vertex remains as a singleton. Given that the cycle \( C_5 \) has length five, the remaining \(( r - 2)\) vertices from each edge must be arranged into singletons and tripletons within the partition \( \varphi \), ensuring that \( \varphi \) satisfies the conditions of a transversal coalition partition. Observe that we need to partition the vertices at the $j^{\text{th}}$ position of every edge with $j\neq 1,2$ into at most three sets. And the $u_i$'s can be partitioned into at most $3$ sets. Thus, $k=3(r-2)+3=3(r-1)$.
The resulting transversal coalition partition is: 
$\varphi = \{ P_1=\{u_1\}, ~P_2=\{u_2, u_4\}, ~P_3=\{u_3, u_5\}, ~P_4=\{x_{1,1}\}, ~P_5=\{x_{2,1}, x_{3,1}, x_{4,1}\}, ~P_6=\{x_{5,1}\},\ldots,
~P_{3r-5}=\{x_{1,r-2}\}, ~P_{3r-4}=\{x_{2,r-2}, x_{3,r-2}, x_{4,r-2}\}, ~P_{3(r-1)}=\{x_{5,r-2}\}\}$. Here $P_2$ forms a transversal coalition with $P_1, P_6, P_9,\ldots, P_{3(r-1)}$;  $P_3$ forms a transversal coalition with $P_1, P_4, P_7,\ldots,P_{3r-5}$; Also, $P_5$, $ P_8, P_{11},\ldots, P_{3r-4}$ forms a transversal coalition with $P_1$.
The transversal coalition number $C_\tau(C_5) = 3(r-1).$\\
Consider a linear cycle
$C_6 = u_1 e_1 u_2e_2 u_3e_3u_4 e_4u_5e_5 u_6e_6u_1$ with edge set 
$\mathcal{E}=\{e_1=\{u_1, u_2, x_{1,1},\ldots,\\ x_{1,r-2}\},
~e_2=\{u_2, u_3, x_{2,1},\ldots, x_{2,r-2}\},
~e_3 = \{u_3, u_4, x_{3,1},\ldots, x_{3,r-2}\},
~e_4 = \{u_4, u_5, x_{4,1},\ldots,\\ x_{4,r-2}\},
~e_5 = \{u_5, u_6, x_{5,1},\ldots, x_{5,r-2}\},
~e_6=\{u_6, u_1, x_{6,1},\ldots, x_{6,r-2}\}\}$. To ensure that \( \varphi \) constitutes a valid \( C_\tau(C_6) \) partition, assigning each vertex \( u_i \) to its own singleton set would result in at least one edge not being fully covered by the union of any two singletons \( \{u_i\} \) and \( \{u_j\} \), where \( i \ne j \). Therefore, at least two non-adjacent vertices must be grouped to guarantee full edge coverage, while two other vertices remain in singleton sets. Since \( C_6 \) is a 6-cycle, the remaining \(( r - 2) \) vertices on each edge must be organized into a combination of doubletons and a 4-vertex set within the partition \( \varphi \). This configuration ensures that \( \varphi \) satisfies the requirements of a transversal coalition partition, providing full coverage of all edges through the appropriate merging of vertex sets. Observe that we need to partition the vertices at the $j^{\text{th}}$ position of every edge with $j\neq 1,2$ into at most two sets. And $u_i$'s can be partitioned into a maximum of 4 sets. Thus, $k=2(r-2)+4=2r$. The resulting transversal coalition partition is: $\varphi=\{P_1=\{u_1\}, ~P_2=\{u_2, u_4\}, ~P_3=\{u_3, u_5\}, ~P_4=\{u_6\}, P_5=\{x_{1,1},x_{6,1} \}, ~P_6=\{x_{2,1},x_{3,1}, x_{4,1}, x_{5,1}\}, \ldots, ~P_{2r-1}=\{x_{1,r-2},x_{6,r-2}\}, ~P_{2r}=\{x_{2,r-2}, x_{3,r-2}, x_{4,r-2}, x_{5,r-2}\}\}$.\\
\indent Here, $P_1$ forms a transversal coalition with $P_3$, $P_6$, $P_8$, $P_{10}$,$\ldots$,$P_{2r}$; $P_4$ forms a transversal coalition with $P_2$; $P_5$ forms a transversal coalition with  $P_3$, $P_6$, $P_8$, $P_{10}$,$\ldots$,$P_{2r}$; $P_7$ forms a transversal coalition with  $P_3$, $P_6$, $P_8$, $P_{10}$, $\ldots$,$P_{2r}$; $P_9$ forms a transversal coalition with  $P_3$, $P_6$, $P_8$, $P_{10}$,$\ldots$,$P_{2r}$; $P_{11}$ forms a transversal coalition with $P_3$, $P_6$, $P_8$, $P_{10}$,$\ldots$,$P_{2r}$; We can easily check that $P_{2r-1} $ forms a transversal coalition with $P_3$, $P_6$, $P_8$, $P_{10}$,$\ldots$,$P_{2r}$. Here, each member in the $trc$-partition is a transversal coalition partner of exactly $(r-1)$ members.
The transversal coalition number $C_\tau(C_6) = 2r$. \\
In general, we can write a transversal coalition partition of a cycle $C_n$ for any $n$. Let $C_n=u_1 e_1 u_2e_2u_3e_3\\u_4e_4u_5e_5u_6\ldots u_ne_nu_1$ be a cycle of length $n$ with edge set
$\mathcal{E}=\{e_1=\{u_1, u_2, x_{1,1},\ldots, x_{1,r-2}\},
~e_2=\{u_2, u_3, x_{2,1},\ldots, x_{2,r-2}\},
~e_3=\{u_3, u_4, x_{3,1},\ldots, x_{3,r-2}\},
~e_4=\{u_4, u_5, x_{4,1},\ldots, x_{4,r-2}\},
~e_5=\{u_5, u_6, x_{5,1},\ldots, x_{5,r-2}\},\ldots
~e_n=\{u_n, u_1, x_{n,1},\ldots, x_{n,r-2}\}\}$.\\
{\bf Case 1:} If $n$ is even then we get a transversal coalition partition as $\varphi=\{ P_1, ~P_2, ~P_3, ~P_4, ~P_5, ~P_6, ~P_7, ~P_8,\\\ldots, P_{2r-1}, P_{2r}\}=\{ \{u_1\}, \{u_2,u_4\},\{u_3,u_5,\ldots,u_{n-1}\}, \{u_6,u_8,\ldots, u_n\}, \{ x_{1,1},x_{n,1}\}, \{ x_{2,1}, x_{3,1}, x_{4,1},\\\ldots, x_{n-1,1}\}, \{ x_{1,2}, x_{n,2}\},\{ x_{2,2}, x_{3,2}, x_{4,2},\ldots, x_{n-1,2}\},\ldots, \{ x_{1,r-2}, x_{n,r-2}\}, \{ x_{2,r-2}, x_{3,r-2},\ldots,\\ x_{n-1,r-2}\}\}$. Here, $P_1$ forms a transversal coalition with $P_3$, $P_6$, $P_8$, $P_{10}$,$\ldots$,$P_{2r}$; $P_4$ forms a transversal coalition with $P_2$; $P_5$ forms a transversal coalition with  $P_3$, $P_6$, $P_8$, $P_{10}$,$\ldots$,$P_{2r}$; $P_7$ forms a transversal coalition with  $P_3$, $P_6$, $P_8$, $P_{10}$,$\ldots$,$P_{2r}$; $P_9$ forms a transversal coalition with  $P_3$, $P_6$, $P_8$, $P_{10}$, $\ldots$, $P_{2r}$; $P_{11}$ forms a transversal coalition with $P_3$, $P_6$, $P_8$, $P_{10}$,$\ldots$,$P_{2r}$; We can easily check that $P_{2r-1} $ forms a transversal coalition with $P_3$, $P_6$, $P_8$, $P_{10}$, $\ldots$,$P_{2r}$. Here, each member in the $trc$- partition is a transversal coalition partner of exactly $(r-1)$ members. We can partition \(u_i\)'s into at most four sets, while the remaining \((r-2)\) vertices of each edge can be partitioned into two sets to obtain the maximum order $trc$-partition. Therefore, $C_\tau(C_n)=2(r-2)+4=2r$.\\
{\bf Case 2:} If $n$ is odd then we get a transversal  coalition partition as $\varphi=\{ P_1, ~P_2, ~P_3, ~P_4, ~P_5, ~P_6, \ldots,\\ ~P_{3r-5}, ~P_{3r-4}, ~P_{3r-3}\}=\{ \{u_1\}, \{u_2,u_4,\ldots, u_{n-1}\},\{u_3,u_5,\ldots, u_{n}\}, \{ x_{1,1}\}, \{ x_{2,1}, x_{3,1}, x_{4,1}, \ldots, \\x_{n-1,1}\}, \{x_{n,1}\},  \{ x_{1,2}\}, \{ x_{2,2}, x_{3,2},  x_{4,2},\ldots, x_{n-1,2}\}, \{x_{n,2}\},\ldots, \{ x_{1,r-2}\}, \{ x_{2,r-2}, x_{3,r-2},\ldots,\\ x_{n-1,r-2}\}, \{x_{n,r-2}\}\}$. Here, $P_1$ forms a transversal coalition with $P_2$, $P_3$, $P_5$, $P_8$, $P_{11}$,$\ldots$, $P_{3r-4}$; $P_2$ forms a transversal coalition with $P_1$, $P_6$, $P_9$, $P_{12}$, $P_{15}$,$\ldots$, $P_{3r-3}$; $P_3$ forms a transversal coalition with $P_1$, $P_4$, $P_7$, $P_{10}$, $P_{13}$,$\ldots$, $P_{3r-5}$. Observe that $P_1$ forms a transversal coalition with $r$ members in the $trc$-partition. Also, $P_2$ and $P_3$ form a transversal coalition with exactly $(r-1)$ members in the $trc$-partition. We can partition \(u_i\)'s into at most three sets, while the remaining \((r-2)\) vertices of each edge can be partitioned into three sets to obtain the maximum order $trc$-partition. Therefore, $C_\tau(C_n)=3+3(r-2)=3(r-1)$.\\
\indent Suppose we partition $P_2$ as $P_{2,1}$ and $P_{2,2}$ for $n $ odd then we get the new coalition partition as $\varphi'=\{ P_1, ~P_{2,1},~P_{2,2}, ~P_3, ~P_4, ~P_5, ~P_6,\ldots, ~P_{3r-5}, ~P_{3r-4}, ~P_{3r-3}\}$, where $P_{2,1}=\{ u_2, u_6,\ldots, u_{n-3}\}$ and
$P_{2,2}=\{ u_4, u_8,\ldots, u_{n-1}$\}. Here $P_{2,1}$ and $P_{2,2}$ are not transversal coalition partners. Also, $P_6$ doesn't form a transversal coalition with either $P_{2,1}$ or $P_{2,2}$. Observe that it does not form a transversal coalition with any other set in $\varphi'$. If we consider that $P_6$ forms a coalition with $P_5$, but then the edge $e_1$ is not covered by $P_5\cup P_6$. Therefore, we need to consider the set $P_5\cup P_6$, which forms a coalition with $P_4$. Similarly, we need to consider the union of sets  $P_8\cup P_9,\ldots, P_{3r-4}\cup P_{3r-3}$. As a result, the transversal coalition number will be reduced. Hence, $(3r-3) $ is the maximum order in the transversal coalition partition of $C_n$ for $n$-odd. Similarly, if we partition any other set in $\varphi'$, then we can easily show that the transversal coalition number will be less than $(3r-3)$. Also, for $n$ even, we can show that ${2r}$ is the maximum order in the transversal coalition partition. Therefore, $C_\tau(C_n)=3(r-1)$ if $n$-odd, and $C_\tau(C_n)=2r$ if $n$-even.
\end{proof}	

    \end{document}